\documentclass[11pt]{amsart}

\usepackage{hyperref}
\usepackage{microtype}
\usepackage{amsmath}
\usepackage{amsthm}
\usepackage{amssymb}
\usepackage{amsfonts}
\usepackage{xspace}
\usepackage{graphicx}
\usepackage{framed}
\usepackage{float}
\usepackage[]{algorithm2e}

\newcommand{\YES} {{\sc Yes}\xspace}
\newcommand{\NO}  {{\sc No}\xspace}
\newcommand{\su}{ {\rm SU(2)}\xspace }
\newcommand{\OO}{ \mathcal{O} }
\newcommand{\unknot}{{\sc UnKnot Recogntion}\xspace }

\newcommand{\NP}{{\sf NP}\xspace}
\newcommand{\coNP}{{\sf co{\text -}NP}\xspace}

 \newtheorem{theorem}{Theorem}
 \newtheorem{lemma}[theorem]{Lemma}
 \newtheorem{observation}[theorem]{Observation}
 \newtheorem{proposition}[theorem]{Proposition}
\newtheorem{definition}[theorem]{Definition}
\newtheorem{remark}[theorem]{Remark}

%

\author{Syed M. Meesum}
\address{Institute of Computer Science\\{University of Wroc{\l}aw, Poland}}
\email{meesum.syed@gmail.com}

\author{T. V. H. Prathamesh}
\address{Department of Computer Science, University of Innsbruck, Innsbruck Austria}
\email{prathamesh.t@gmail.com}

\keywords{knot theory, computational topology, logic, real algebraic geometry, algorithms, symbolic computation.}

\title{Unknot Recognition Through Quantifier Elimination}

\begin{document}

\maketitle

\begin{abstract}
Unknot recognition is one of the fundamental questions in low dimensional topology. 
In this work, we show that this problem can be encoded as a validity problem in the existential fragment of the first-order theory of real closed fields. This encoding is derived using a well-known result on \su representations of knot groups by Kronheimer-Mrowka. We further show that applying existential quantifier elimination to the encoding enables an \unknot algorithm with a complexity of the order $2^{\OO(n)}$, where $n$ is the number of crossings in the given knot diagram. Our algorithm is simple to describe and has the same runtime as the currently best known unknot recognition algorithms. 
\end{abstract}

\section{Introduction}
In mathematics, a knot refers to an entangled loop. The fundamental problem in  the study of knots is the question of knot recognition: can two given knots be transformed to each other without involving any cutting and pasting?
This problem was shown to be decidable by Haken in 1962 \cite{haken1961theorie} using the theory of normal surfaces.
We study the special case in which we ask if it is possible to untangle a given knot to an unknot.
The \unknot recognition algorithm takes a knot presentation as an input and answers \YES if and only if the given knot can be untangled to an unknot.
The best known complexity of \unknot recognition is $2^{\OO(n)}$, where $n$ is the number of crossings in a knot diagram \cite{burton2012fast, haken1961theorie}.

More recent developments show that the \unknot recognition is in  $\NP \cap \coNP$. 
Using the theory of normal surfaces, Hass, Lagarias and Pippenger \cite{hass1999computational} proved existence of an \NP membership certificate for \unknot.
A notion which is closer to the commonly accepted notion of untangling a knot is that of using Reidemeister moves.
The existence of a polynomial length sequence of Reidemeister moves having size $\OO(n^{11})$, that untangles an unknot, was proved to exist by Lackenby \cite{lackenby2015polynomial}. Searching over all possible Reidemeister moves will give a simple to describe algorithm having runtime of $2^{\OO(n^{11})}$. 
According to Lackenby \cite{lackenby2015polynomial}, a proof sketch for \coNP membership of \unknot was first announced by Agol, but not written down in detail.
Assuming the Generalized Reimann Hypothesis, a polynomial-time certificate for non-membership of a knot in \unknot was proved to exist by Kuperberg \cite{kuperberg2014knottedness}.
Finally, an unconditional proof for the membership of \unknot in \coNP was given by Lackenby \cite{lackenby2016efficient}.

Several approaches to unknot recognition can be found in literature, like complete knot invariant such as Khovanov homology, branching algorithms \cite{burton2012fast} involving normal surface theory, manifold hierarchies\cite{lackenby2016efficient}, Dynnikov diagrams \cite{dynnikov2006arc}, and  equational reasoning \cite{fish2016efficient}. 

Most of the known algorithms deciding \unknot are complex and have an involved analysis. 
The authors believe that this paper presents a simpler alternate algorithm, which relies on reducing the above problem to a sentence in the existential theory of reals~\cite{Renegar89}. This enables application of the decision procedure for existential theory of reals using quantifier elimination, to obtain an algorithm which is exponential in complexity, thus of the same complexity class as the best known approaches.\\

\textbf{Acknoweldgements:} The authors would like to thank the Institute of Mathematical Sciences, HBNI, Chennai, India, where a part of the work was carried out. 
The first author is supported by the NCN grant number 2015/18/E/ST6/00456. The second author is supported by the FWF project number P30301.
\section{Preliminaries}


This section contains some of the basic definitions in knot theory, and a brief note on quantifier elimination and existential theory of reals without explicitly stating the algorithm. 
For a more detailed introduction to knot theory one may refer to \cite{lackenby2016elementary, crowell2012introduction, lickorish2012introduction, kawauchi2012survey}, and for quantifier elimination in existential theory of reals, one may refer to \cite{basu2007algorithms}.

For a natural number $n$, we use $[n]$ to denote the set $\{1, 2,\dots, n\}$.
We use $\su$ to denote the group which contains $2\times 2$ complex hermitian matrices with unit determinant, with multiplication as the group operation. For a natural number $d$, we use $S^d$ to denote the subset of $\mathbb{R}^d$ with euclidean norm equal to one. The symbol $i$ denotes $\sqrt{-1}$, the imaginary root of unity. The symbol $\wedge$ is used to denote the operation of logical conjunction. The symbol $\vee$ is used to denote the operation of logical disjunction.

\subsection{Knot Theory}

\subsubsection{Basic Definitions}

\begin{definition}
A (tame) knot $K$ is the image of a smooth injective map from $S^1$ to $S^3$.
\end{definition}

\begin{remark} $S^3$ in the above definition can be replaced by $\mathbb{R}^3$. But we use $S^3$, because some of the concepts that we introduce such as knot groups, exist only in the context of the embedding of a circle in $S^3$.
\end{remark}

Two knots are considered to be the same, if they are related by an equivalence condition called ambient isotopy. It is defined as follows:

\begin{definition}[Ambient Isotopy]
The knots $K_1$ and $K_2$ are ambient isotopic if there exists a smooth map $F:S^3 \times [0,1] \rightarrow S^3$ such that:
\begin{itemize}
\item $\forall x \in S^3.\ F(x, 0) = x$.
\item $F(K_1, 1) = K_2$.
\item $\forall t \in [0,1].\ F(S^3, t)$ is a homeomorphism of $S^3$.  
\end{itemize}
\end{definition}

Ambient isotopy describes when a knot can be transformed into another by a deformation that does not involve any cutting and pasting. To draw a knot on paper, we use the convention that wherever the string is shown broken it is assumed to be passing under the unbroken string. To illustrate the above condition, consider the following knots:

\setlength{\tabcolsep}{3.0em}
\begin{figure}[H]
\begin{center}
\begin{tabular}{c c c}
{
\includegraphics[scale=0.21]{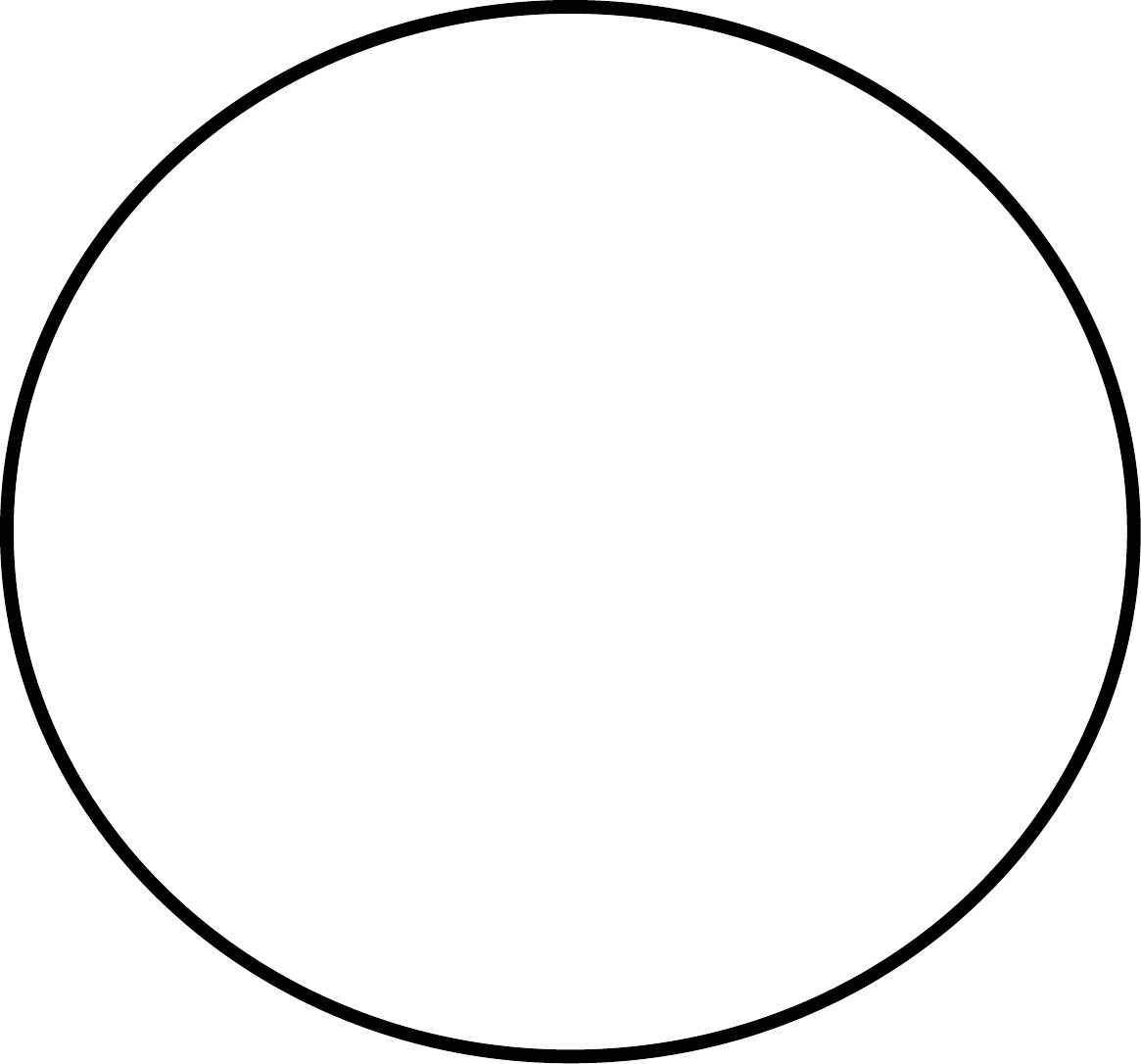}
}
& 
{
\includegraphics[scale=0.15]{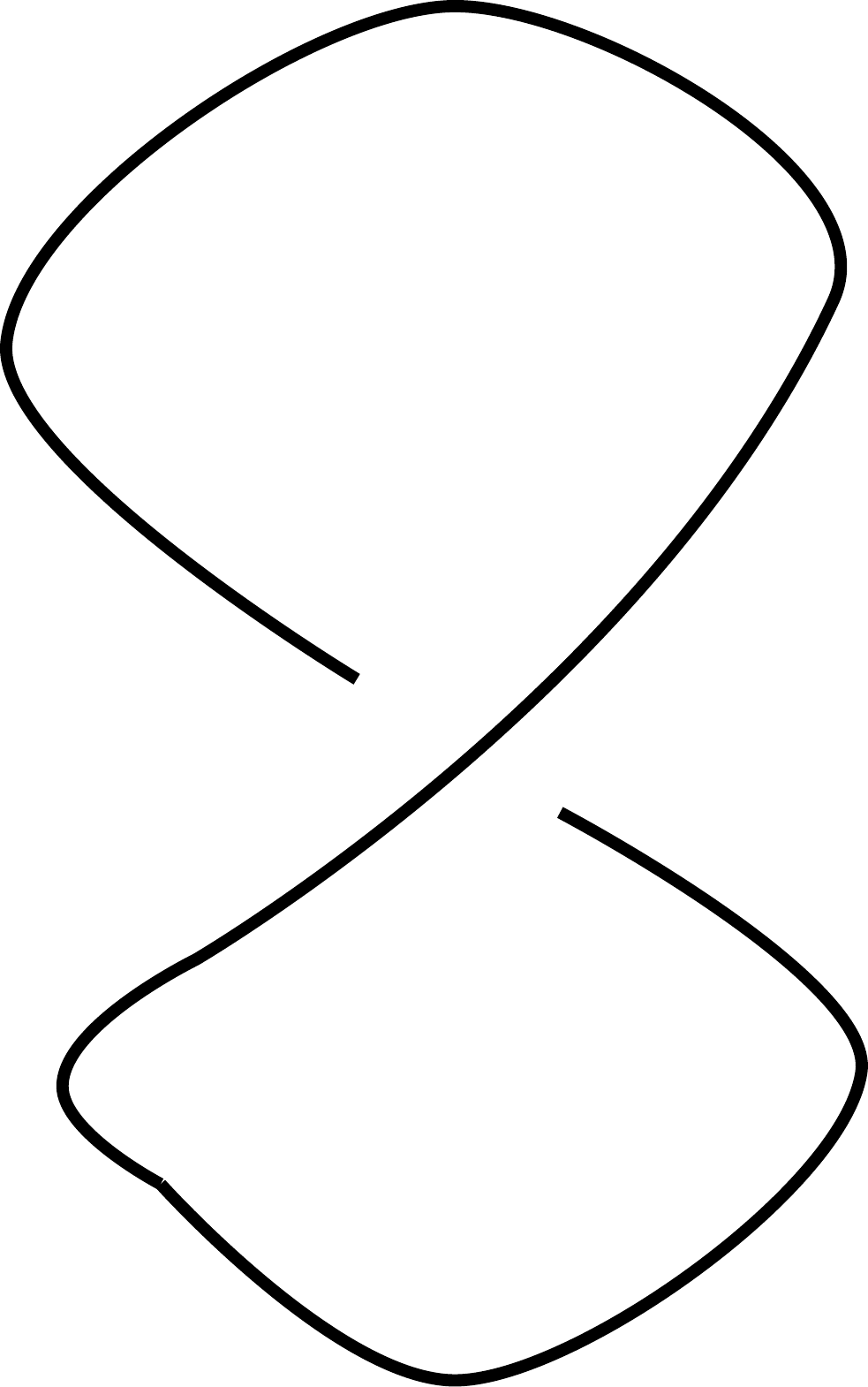}
}
&
{
\includegraphics[scale=0.12]{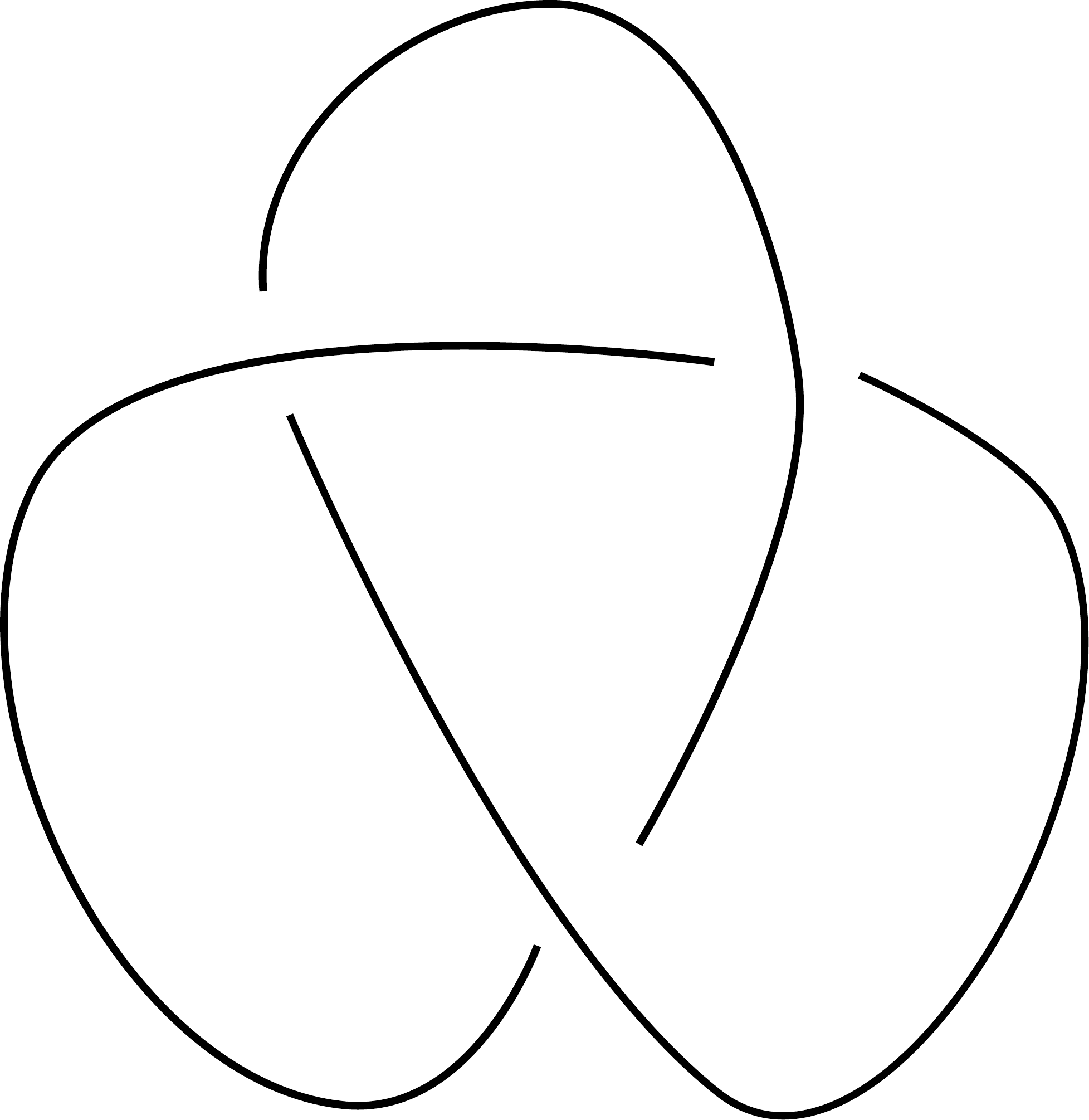}
}\\
1) Unknot & 2) A Twist & 3) Trefoil Knot\\
\end{tabular}
\end{center}
\end{figure}

The first two knot diagrams shown above can be deformed into each other by twisting/untwisting, thus they represent the same knot.  Deforming either of the first two knots into the third knot, would involve cutting and pasting. Thus it is different from the former knots. 

\begin{definition} An \emph{unknot} is a knot which is ambient isotopic to the circle $S^1$. A knot $k$ is \emph{knotted} if and only if it is not an unknot. 
\end{definition}

Determining when two diagrams represent the same knot, is the key problem of knot theory.  The special case of it, determining when a given knot diagram is equivalent to unknot is called the unknot recognition problem. There have been several algorithms and approaches to the knot recognition, listed in the previous section.

\subsubsection{Knot Group} 

\begin{figure}[b]
\centering %
\includegraphics[scale=0.6]{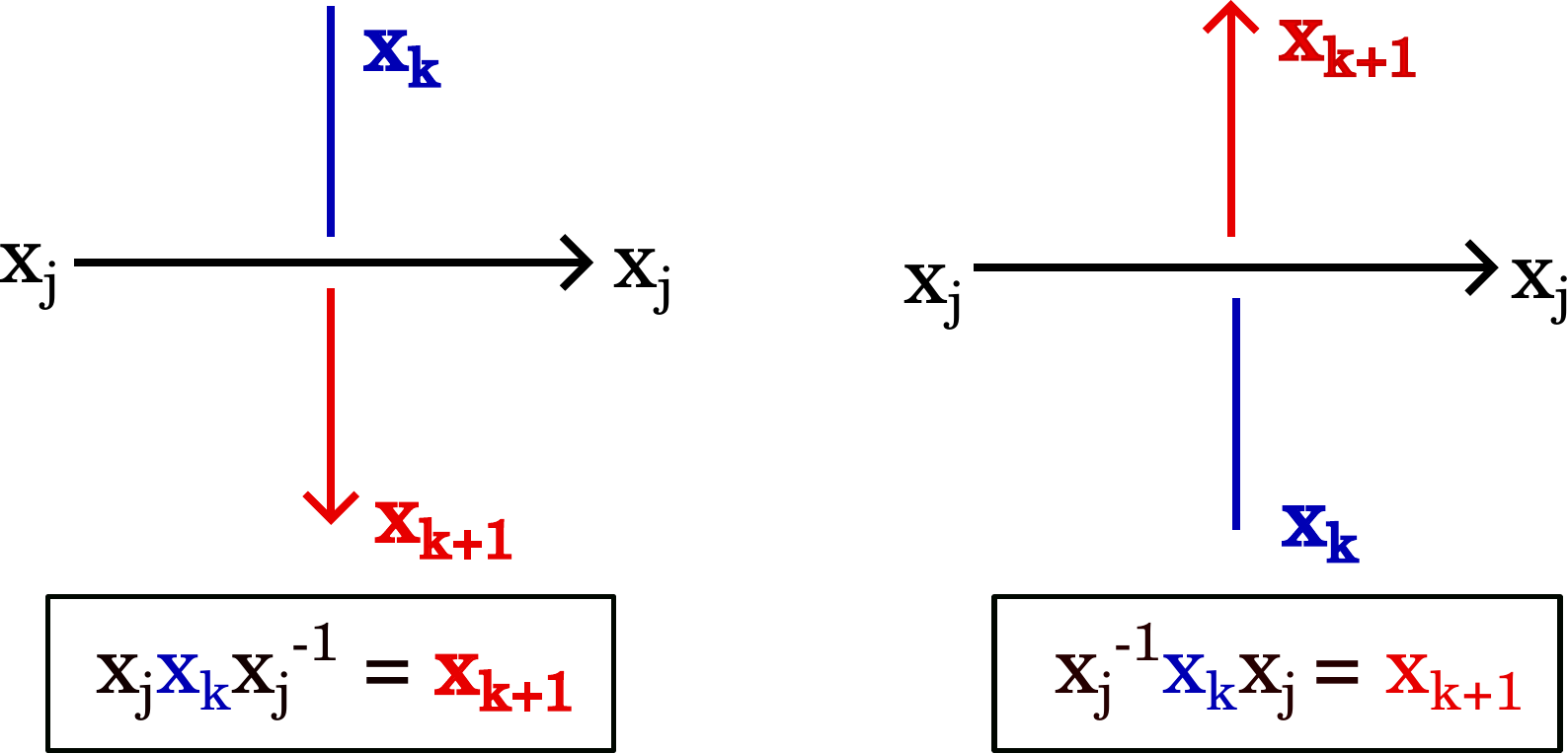}
\caption{Wirtinger presentation relations for the knot group.}
\label{fig:crossing-group}
\end{figure}

One of the known invariants of knots is the fundamental group of the knot complement. Knot complement refers to the compact 3-manifold obtained by considering the complement of a tubular neighbourhood of the knot. This invariant can detect knots up to mirror image. Presentations of this group, called the Wirtinger presentation, can be easily computed from a  knot diagram in the following manner:
\begin{itemize}
\item The knot diagram is oriented in one of the two possible directions. The string constituting the knot is given a direction which fixes the direction of all the arcs occurring in the knot diagram.
\item Every connected arc is associated to a distinct generator.
\item Every crossing gives rise to a relation in the presentation. The relation depends on the orientation of the arcs in the crossing, in the manner as shown in Figure \ref{fig:crossing-group}.
\end{itemize} 

Computing the Wirtinger presentation of a group from the diagram can be achieved using the steps described above in time which is a linear function of the number of crossings in the given knot diagram.

\subsubsection{$\su$ representations of the knot group}\label{ss:km} The following theorem by Kronheimer-Mrowka, translates unknot recoginition to existence of non-commutative $\su$ representations of the knot group.

\begin{proposition}[\cite{kronheimer2004witten}, \cite{kuperberg2014knottedness}]
\label{thm:embedding}
 If $K$ is knotted, then it has an non-commutative $\su$ representations of the knot group $\pi_1(S^3 \setminus K)$.
\end{proposition}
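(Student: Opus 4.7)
The proof of this deep theorem is due to Kronheimer and Mrowka, and is used in this form by Kuperberg; my plan mirrors theirs. I would argue by contrapositive: assume every homomorphism $\rho : \pi_1(S^3 \setminus K) \to \su$ has abelian image, and aim to show that $K$ is the unknot. The bridge from such a representation-theoretic hypothesis to a topological conclusion is a gauge-theoretic invariant built out of $\su$ connections, which I now introduce.

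Specifically, I would invoke the reduced singular instanton knot homology $I^{\natural}(K)$ of Kronheimer and Mrowka. Its chain complex is generated by gauge equivalence classes of perturbed flat $\su$ connections on the complement of $K \cup \omega$, where $\omega$ is an auxiliary \emph{earring} arc linking $K$, subject to the constraint that the holonomy around the meridian of $K$ has trace zero. Via the holonomy correspondence, these connections are essentially $\su$ representations of $\pi_1(S^3 \setminus (K \cup \omega))$ with prescribed meridian holonomies, and the role of $\omega$ is precisely to exclude reducible solutions. The differential counts singular anti-self-dual instantons on the cylinder over the knot complement. I would use two deep inputs as black boxes: (a) the construction and topological invariance of $I^{\natural}$, and (b) the unknot-detection property $\mathrm{rk}\, I^{\natural}(K) = 1$ if and only if $K$ is the unknot.

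Granting these, the argument is short. The hypothesis that every $\su$ representation of $\pi_1(S^3 \setminus K)$ has abelian image forces the only such representation sending the meridian to a fixed trace-zero element to be, up to conjugation, the unique abelian character $\rho_0$ with $\rho_0(\mu) = i$. A direct computation with the Wirtinger presentation of $\pi_1(S^3 \setminus (K \cup \omega))$, incorporating the earring relation, shows that the corresponding perturbed representation variety collapses to a single point. Hence the chain complex for $I^{\natural}(K)$ has rank one, the differential vanishes for degree reasons, and $\mathrm{rk}\, I^{\natural}(K) = 1$. By the unknot-detection property, $K$ is the unknot, contradicting the assumption that $K$ is knotted.

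The main obstacle, and the reason the theorem is deep, is the construction of $I^{\natural}(K)$ itself and the proof that it detects the unknot. This requires singular Yang--Mills analysis: transversality for anti-self-dual connections with prescribed holonomy singularities, compactness modulo bubbling of the relevant moduli spaces, invariance under choices of metric and holonomy perturbation, and a nontrivial chain of reductions that, via the spectral sequence from reduced Khovanov homology to $I^{\natural}$, pins down the rank-one case. None of this is elementary, and in the present paper the proposition is accordingly used as a black box to drive the subsequent real-algebraic encoding.
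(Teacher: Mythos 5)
The paper does not prove this proposition at all: it is imported verbatim as a black box from Kronheimer--Mrowka (via Kuperberg), and the only ``proof''-adjacent text in the paper is the remark that the converse direction of Lemma~\ref{cor:embedding} follows because the unknot group is $\mathbb{Z}$. So there is no internal argument to compare yours against; what you have written is a summary of the external proof in the cited references, and as such it is essentially accurate. The contrapositive structure, the role of the earring $\omega$ in killing reducibles, the identification of chain-complex generators with traceless-meridian $\su$ representations, and the collapse to the single abelian character $\rho_0(\mu)=i$ when no non-commutative representation exists are all faithful to Kronheimer--Mrowka's argument. One genuine inaccuracy: the unknot-detection property $\operatorname{rk} I^{\natural}(K)=1 \Leftrightarrow K$ unknot is \emph{not} obtained from the spectral sequence out of reduced Khovanov homology --- that spectral sequence only gives the upper bound $\operatorname{rk} I^{\natural}(K) \le \operatorname{rk} \mathit{Khr}(K)$, so it runs in the wrong direction for your purposes. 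The detection statement instead comes from the identification of $I^{\natural}$ (or the sutured invariant $\mathit{KHI}$) with an invariant that detects the Seifert genus via Gabai's sutured manifold hierarchies; the Khovanov spectral sequence is then a \emph{consequence} used to deduce that Khovanov homology detects the unknot, not an input. With that correction, your outline is a fair account, though of course the analytic content (transversality, compactness, invariance of the singular instanton theory) remains entirely black-boxed --- which is also exactly what the paper under review does.
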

 The following lemma is derived from the theorem above. The reverse direction of the lemma follows from the fact the knot group of the unknot is $\mathbb{Z}$, and all its $\su$ representations are commutative. 
 \begin{lemma}
 \label{cor:embedding}
 A knot $K$ is  knotted if and only if there exists a non-commutative $\su$ representation of the knot group $\pi_1(S^3 \setminus K)$.
\end{lemma}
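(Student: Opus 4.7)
The plan is to split the biconditional and use \cite{kronheimer2004witten} for one direction and a short topological computation for the other.

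For the forward implication, namely that $K$ being knotted forces the existence of a non-commutative $\su$ representation of $\pi_1(S^3 \setminus K)$, I will simply invoke Proposition~\ref{thm:embedding} verbatim; no further work is required since that is exactly what the Kronheimer--Mrowka result asserts.

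For the reverse implication, I will prove the contrapositive: if $K$ is an unknot, then every $\su$ representation of the knot group is abelian, hence commutative. First I would recall that by definition an unknot is ambient isotopic to the standard circle $S^1 \subset S^3$, so $\pi_1(S^3 \setminus K) \cong \pi_1(S^3 \setminus S^1)$. Next I would compute the latter group: using the standard genus-one Heegaard splitting of $S^3$ (or equivalently observing that $S^3 \setminus S^1$ deformation retracts onto a circle linking the unknot once), one obtains $\pi_1(S^3 \setminus S^1) \cong \mathbb{Z}$. Let $\rho : \mathbb{Z} \to \su$ be any representation. Since $\mathbb{Z}$ is generated by a single element $g$, the image $\rho(\mathbb{Z})$ is the cyclic subgroup $\langle \rho(g) \rangle \subseteq \su$, which is abelian. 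Therefore no non-commutative $\su$ representation of $\pi_1(S^3 \setminus K)$ can exist when $K$ is an unknot, which is the contrapositive of the desired implication.

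There is no substantive obstacle in this proof: the only non-trivial ingredient is the already-cited Proposition~\ref{thm:embedding}, and the reverse direction is a standard fact that is essentially flagged in the sentence preceding the lemma statement. The main thing to be careful about is phrasing the contrapositive cleanly and citing (or briefly justifying) the computation $\pi_1(S^3 \setminus S^1) \cong \mathbb{Z}$, which can be done by referring to any of the introductory knot theory references \cite{lickorish2012introduction, crowell2012introduction} already in the bibliography.
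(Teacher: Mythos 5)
Your proof is correct and follows exactly the route the paper takes: the forward direction is a direct citation of Proposition~\ref{thm:embedding}, and the reverse direction is the paper's one-sentence remark (the unknot group is $\mathbb{Z}$, whose $\su$ representations are all commutative) spelled out in detail. No issues.
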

We note that every finitely presented group has a trivial homomorphism to the
group $\su$ via a mapping of each generator to the identity matrix.

\subsection{Quantifier Elimination in Existential Theory of Reals}

Decidability of the first-order existential theory of reals refers to the existence of a decision procedure for validity of all sentences of the following form:

\[\exists \bar{X}.\ F(\bar{X}) \]

Where $F$ is a conjunction of polynomial equalities and inequalities in real variables $\bar{X}$.  It follows from the Tarski-Seidenberg theorem that the above problem is decidable by the quantifier elimination algorithm. The quantifier elimination in this case, in fact holds true for deciding validity of all first-order sentences. Quantifier elimination algorithm refers to computation of a quantifier free sentence, which is equivalent to the sentence with quantifiers. Validity of the quantifier free sentences can be computed, which makes the algorithm a decision procedure for the first-order theory. Quantifier elimination algorithm in existential fragment is restricted to finding equivalent quantifier free sentences only for first-order sentences with existential quantifiers, of the form described above.

The known complexity bounds for quantifier elimination in the general first-order theory of reals are doubly exponential. The existential fragment has a much lower complexity bound, as stated in the following result: 
\begin{proposition}[see Proposition $4.2$ in \cite{Renegar89}]
  \label{theorem:real-feasibility}
  Given a set $\mathcal{P}$ of equations, each of which is either a $\ell$ polynomial equalities or inequality of degree $d$ in $k$ variables,
and  with integer coefficients of bit length $L$, we can decide the feasibility of $\mathcal{P}$ with 
  $L \log L \log \log L(\ell\cdot d)^{\OO(k)}$ bit operations.
  \looseness-1
\end{proposition}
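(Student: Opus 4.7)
The plan is to treat Renegar's theorem as a statement about the complexity of deciding emptiness of a real semi-algebraic set, and to sketch the critical point method that underlies it. The high-level idea is: if the set defined by $\mathcal{P}$ is non-empty, then every semi-algebraically connected component contains a point that is a critical point of a suitably chosen distance function, so feasibility reduces to enumerating these critical points and checking the signs of the input polynomials at each one.

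First I would normalize $\mathcal{P}$ by converting each inequality $p_i \ge 0$ (resp.\ $p_i > 0$) into an equality $p_i - s_i^2 = 0$ (resp.\ $p_i \cdot s_i^2 - 1 = 0$) via slack variables, so that the feasibility question becomes the non-emptiness of a real algebraic variety $V \subseteq \mathbb{R}^{k+\ell}$ cut out by at most $\ell$ polynomials of degree $\le d+1$ in $k+\ell$ variables. Next I would apply a symbolic infinitesimal perturbation (working in $\mathbb{R}\langle \varepsilon \rangle$) so that $V$ becomes a smooth complete intersection of the right dimension and every connected component is bounded; this is the standard device that lets one appeal to the critical point theorem without losing control of degrees.

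With this in hand, the plan is to consider the squared distance function $\sum (X_j - a_j)^2$ to a generic rational point $a$, and form its critical point system by adjoining the Lagrange equations (gradients of the defining polynomials span the normal space). This yields a zero-dimensional polynomial system whose Bézout number is $(\ell d)^{O(k)}$. Producing sample points in each component then reduces to solving this zero-dimensional system, which one does via a rational univariate representation (a Shape-Lemma style parametrization by a single variable using a random linear form) obtained through a chain of multivariate resultants. Real root isolation on the resulting univariate polynomial, followed by evaluation of the original polynomials at each isolated root using Thom encodings or sign-determination schemes, finishes the decision.

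The bit-complexity bookkeeping is the step I expect to be the real obstacle, and it is where Renegar's contribution over earlier cylindrical-decomposition approaches lies. Each resultant in the chain multiplies degrees and blows up coefficient heights; one must argue inductively that after all $k+\ell$ eliminations the univariate polynomial has degree $(\ell d)^{O(k)}$ and coefficients of bit length $L \cdot (\ell d)^{O(k)}$. Combined with a fast integer multiplication algorithm (Sch\"onhage--Strassen, which contributes the $L \log L \log\log L$ factor) and the fact that sign determination on a univariate polynomial of degree $D$ and height $H$ takes $\tilde{O}(D)$ arithmetic operations on integers of size $\tilde{O}(DH)$, one collects the overall bound $L \log L \log\log L \,(\ell d)^{O(k)}$. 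The delicate part throughout is to avoid recomputing from scratch at each elimination step, and to argue that the symbolic $\varepsilon$ can be specialized to $0$ at the end without affecting the sign determinations — both facts follow from standard results in effective real algebraic geometry that I would cite rather than re-prove.
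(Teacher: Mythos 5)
The paper does not prove this proposition at all: it is imported verbatim (as ``Proposition 4.2'' of Renegar) and used as a black box, so there is no internal argument to compare yours against. Your sketch is a legitimate outline of \emph{a} route to singly-exponential decision procedures for the existential theory of the reals, though it is closer in spirit to the critical-point method of Grigoriev--Vorobjov and Basu--Pollack--Roy than to Renegar's own construction, which proceeds via multivariate resultants/$u$-resultants and sign determination at sample points rather than via Lagrange systems for a distance function. That difference by itself would be acceptable.

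However, there is a concrete gap that breaks the claimed bound. Your first step introduces one slack variable per inequality, so the variety $V$ lives in $\mathbb{R}^{k+\ell}$, and every subsequent degree and complexity estimate (the B\'ezout count of the Lagrange system, the degree of the eliminant in the rational univariate representation, the coefficient growth through the resultant chain) then carries an exponent of order $k+\ell$, not $k$. This yields $(\ell d)^{\OO(k+\ell)}$, which is strictly weaker than the stated $(\ell\cdot d)^{\OO(k)}$ whenever $\ell \gg k$; the whole point of Renegar's bound is that the number of polynomials enters only polynomially in the base, never in the exponent. To recover the correct dependence you must avoid per-inequality slack variables: the standard fixes are either to enumerate realizable sign conditions combinatorially and apply the critical-point construction to at most $k$ polynomials at a time, or to follow Renegar in reducing to sign determination of the $\ell$ input polynomials at $(\ell d)^{\OO(k)}$ sample points produced in the original $k$ variables. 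A secondary, smaller issue: your own intermediate claim that the Lagrange system in $k+\ell$ variables has B\'ezout number $(\ell d)^{O(k)}$ is internally inconsistent with the ambient dimension you set up, so the bookkeeping as written does not close. (For the use made of the proposition in this paper none of this matters, since here $\ell=2$, but as a proof of the proposition as stated it does not go through.)
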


\section{Algorithm}

The algorithm {\sc Unknot-QE} appears as Algorithm~\ref{algo:matrig}
on the next page.

\begin{algorithm}[]
\begin{framed}
\DontPrintSemicolon
{\noindent\bf Algorithm:} {\sc Unknot-QE}\;
\SetAlgoLined
\KwData{A knot group $\pi_1(S^3\setminus K) = \langle\ g_1, g_2, \dots ,g_n\ |\ R_1, R_2, \dots, R_n \ \rangle$}

\KwResult{Output \YES if $K$ is an unknot, otherwise output \NO}
\Begin{
  $ \mathcal{E}  \longleftarrow  \emptyset $ \;
  $ \mathcal{P}  \longleftarrow  \emptyset $ \;
  \For{$k \leftarrow 1$ \KwTo $n$ }{
    $M_k \longleftarrow 
      \begin{bmatrix}
      a_k + i b_k & c_k + i d_k \\
      -c_k+ i d_k & a_k - i b_k
      \end{bmatrix}$ \;
   }
  \For{$k \leftarrow 1$ \KwTo $n$}{
    \If{$R_k=g_j g_k g_j^{-1} g_{k+1}^{-1}$}
    {
      $E_k \longleftarrow M_{k+1} M_j - M_j M_k$\;
    }
    \If{  $ R_k=g_j^{-1} g_k g_j g_{k+1}^{-1} $  }
    {
      $E_k \longleftarrow M_k M_j - M_j M_{k+1}$\;
    }
    $E_k^{\rm Re} \longleftarrow \operatorname{Re^{U}}(E_k)$ (the real part of the entries of the first row of $E_k$)\;
    $E_k^{\rm Im} \longleftarrow \operatorname{Im^{U}}(E_k)$ (the complex part of the entries of the first row of $E_k$)\;
    
        Put all the polynomials in $E_k^{\rm Re}$ and $E_k^{\rm Im}$ in the set $\mathcal{P}$ \;
        Put $a_k^2 + b_k^2 + c_k^2 + d_k^2 - 1$ in $\mathcal{P}$\;
        }
        Put the equation $\sum_{p \in \mathcal{P}} p^2 =0$ in $\mathcal{E}$ \;
         
         $\mathcal{N} \longleftarrow \emptyset$ \;
    \For{$k \leftarrow 2$ \KwTo $n$}
    	{
    	Put $a_k-a_1$, $b_k - b_1$, $c_k - c_1$ and $d_k - d_1$ in $\mathcal{N}$ \;
	}
	Put the inequality $\sum_{p \in \mathcal{N}} p^2 \neq 0$ in $\mathcal{E}$ \;
	
	\eIf{$\mathcal{E}$ is satisfiable}
	{\Return\ \YES \;}
      	{\Return\ \NO \;}
}
\end{framed}
\caption{Description of the algorithm for Unknot.\label{algo:matrig}}
\end{algorithm}

\begin{remark} The algorithm can be simplified leading to improvements in efficiency, within the same complexity class, but our choice of description is motivated by expository concerns.
\end{remark}

The key idea behind the algorithm can be stated in terms of the following theorem which will be proved in the next section.
\begin{theorem}
\label{thm:qe_formula} 
There exists a computable map $\Phi$, which takes a knot diagram $K$ to  a sentence in the existential fragment of the first order theory of reals. $K$ is knotted if and only if $\Phi(K)$ is valid. Applying existential quantifier elimination algorithm to $\Phi(K)$ leads to a decision method for \unknot.
\end{theorem}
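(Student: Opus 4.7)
The plan is to realize the map $\Phi$ as the encoding produced by Algorithm \ref{algo:matrig} and to verify correctness by combining Lemma \ref{cor:embedding} with a Wirtinger-specific reformulation of ``non-commutativity'' as ``not all meridian images coincide''.

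First, I would parametrize an $\su$-valued assignment $\rho(g_k) = M_k$ by four real variables $(a_k, b_k, c_k, d_k)$ subject to $a_k^2 + b_k^2 + c_k^2 + d_k^2 = 1$, as in the algorithm. Each Wirtinger relation of the form $g_{k+1} = g_j g_k g_j^{-1}$ rewrites as the matrix equation $M_{k+1} M_j - M_j M_k = 0$, whose entries are polynomials of degree $2$ in the $4n$ real variables; since the lower row of any difference of $\su$ matrices is forced by the complex conjugate of the upper row, vanishing of the real and imaginary parts of the first row alone is a complete polynomial encoding. Collecting all these polynomials together with the $n$ sphere constraints into the set $\mathcal{P}$, the identity $\sum_{p \in \mathcal{P}} p^2 = 0 \Longleftrightarrow \bigwedge_{p \in \mathcal{P}} p = 0$ over $\mathbb{R}$ lets us package existence of an $\su$ representation into a single polynomial equation.

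For non-triviality, I would collect into $\mathcal{N}$ the coordinate differences between $M_1$ and each other $M_k$, so that $\sum_{p \in \mathcal{N}} p^2 \neq 0$ is equivalent to $M_k \neq M_1$ for some $k \geq 2$. The delicate step, and the main obstacle of the proof, is justifying that this inequality really captures non-commutativity of $\rho$ and not merely non-triviality. Here I would exploit the structural fact that in a Wirtinger presentation every relation has the form $g_{k+1} = g_j g_k^{\pm 1} g_j^{-1}$: if the image of $\rho$ is commutative, then each such relation forces $M_{k+1} = M_k$, and since the knot diagram is connected (one strand) these identifications propagate to give $M_1 = M_2 = \cdots = M_n$; conversely, if all $M_k$ coincide then the image is cyclic and hence commutative. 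Combined with Lemma \ref{cor:embedding}, the existential sentence
\[
\Phi(K) \;:=\; \exists (a_k, b_k, c_k, d_k)_{k=1}^n \ \bigl( \textstyle\sum_{p \in \mathcal{P}} p^2 = 0 \;\wedge\; \sum_{p \in \mathcal{N}} p^2 \neq 0 \bigr)
\]
is therefore valid if and only if $K$ is knotted.

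Finally, the complexity claim follows by applying Proposition \ref{theorem:real-feasibility}: the sentence has $k = 4n$ variables, $\ell = 2$ polynomial (in)equalities obtained as sums of $O(n)$ squares of polynomials of degree at most $2$ (so each aggregated polynomial has degree $d \leq 4$), and integer coefficients of $O(1)$ bit length. Substituting into the bound $L \log L \log \log L \, (\ell d)^{O(k)}$ yields $2^{O(n)}$ bit operations, matching the currently best known complexity class for unknot recognition. Correctness of the resulting decision procedure for \unknot is then the conjunction of this complexity estimate with the validity equivalence established above.
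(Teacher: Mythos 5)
Your proposal is correct and follows essentially the same route as the paper: the same $\su$ parametrization and sphere constraints, the same reduction of each Wirtinger relation to the vanishing of the upper-row real and imaginary parts of $E_k$, the same sum-of-squares aggregation for the conjunction of equalities and for the disjunction of inequalities, and the same appeal to Lemma~\ref{cor:embedding} and Proposition~\ref{theorem:real-feasibility}. The only cosmetic difference is that you establish the key equivalence of Lemma~\ref{lem:non-commutative} in contrapositive form (commutative image forces all meridian images to coincide by propagating $M_{k+1}=M_k$ along the strand), whereas the paper argues directly that a distinct consecutive pair $\rho(g_k)\neq\rho(g_{k+1})$ together with the conjugation relation exhibits a non-commuting pair; both rest on the same structural fact about Wirtinger relations.
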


\section{Proof of the Algorithm}

In the proof, we reduce the Kronheimer-Mrowka property, stated in section \ref{ss:km}, to a first-order sentence in existential theory of quantifier elimination. Observe that every knot group has Wirtinger presentations which correspond to knot diagrams. These presentations are of the following form:
\[\langle\ g_1, g_2, \dots ,g_n\ |\ R_1, R_2, \dots, R_n \ \rangle.\] 
For $i \in [n]$, the symbol $g_i$ denotes a generator of the group and $R_i$
denotes a relation satisfied by the generators.
In the Wirtinger presentation, each $R_k$ is 
 either $g_jg_kg_j^{-1}g_{k+1}^{-1}$ or $g_j^{-1}g_k g_j g_{k+1}^{-1}$, where
 $j \in [n]$ and depends on $k$, 
we use $+(R_k)$ or $-(R_k)$ to denote them respectively.

Finding a non-commutative $\su$ representation of $\pi_1(S^3 \setminus K)$, if it exists, can be seen as a conjunction of the following steps:

\begin{enumerate}
 \item Mapping generators of the Wirtinger presentation to matrices in $\su$.
 \item Checking that the above map extends to a well defined homomorphism, i.e. the matrices corresponding to generators satisfy the generating relations of the Wirtinger presentation.
 \item Checking that the map is non-commutative. 
\end{enumerate}

In the following paragraphs, we elaborate on and construct equivalent conditions for each of the above steps. Let $g_k$ be a generator in the Wirtinger presentation, associated to a knot diagram. Consider a map $\Phi$ from the set of generators to $\mathcal{M}$, in which we map $g_k$ to $M_k$. 

\begin{align}
M_k = \begin{bmatrix}
\label{eqn:mat}
a_k + i b_k & c_k + i d_k \\
-c_k+ i d_k & a_k - i b_k \end{bmatrix}
\end{align}
Where $a_k$, $b_k$, $c_k$, $d_k$ are real variables.  For $M_k$ to be an element of $\su$, it must be unitary (i.e. the inverse of $M_k$ is equal to transpose of its complex-conjugate) and it must have unit determinant, which gives us the following extra condition on the variables used to define it.

\begin{observation}(folklore)
\label{obs:1}
$M_k \in \su$ if and only if $(a_k^2 + b_k^2 + c_k^2 + d_k^2 = 1)$.
\end{observation}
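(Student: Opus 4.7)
The plan is to verify the claim by direct matrix computation, exploiting the fact that the parameterization given in equation \eqref{eqn:mat} is already the standard quaternionic parameterization of $\su$: every element of $\su$ can be written uniquely as $\bigl[\begin{smallmatrix} \alpha & \beta \\ -\bar\beta & \bar\alpha \end{smallmatrix}\bigr]$ with $|\alpha|^2 + |\beta|^2 = 1$, and the matrix $M_k$ is exactly this form with $\alpha = a_k + i b_k$ and $\beta = c_k + i d_k$. So the observation amounts to checking that the two defining conditions for membership in $\su$ — unitarity and unit determinant — both collapse to the single real equation $a_k^2 + b_k^2 + c_k^2 + d_k^2 = 1$.

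For the forward direction, I would simply compute the determinant of $M_k$ by expansion along the first row:
\[
\det(M_k) = (a_k + i b_k)(a_k - i b_k) - (c_k + i d_k)(-c_k + i d_k) = a_k^2 + b_k^2 + c_k^2 + d_k^2.
\]
If $M_k \in \su$ then $\det(M_k) = 1$, yielding the desired equation. For the reverse direction, assuming $a_k^2 + b_k^2 + c_k^2 + d_k^2 = 1$, I would verify unitarity by computing $M_k M_k^{*}$, where $M_k^{*}$ denotes the conjugate transpose. The diagonal entries each evaluate to $(a_k^2 + b_k^2) + (c_k^2 + d_k^2)$, and the off-diagonal entries vanish after cancellation (the cross terms in $a_k c_k$, $b_k d_k$, $a_k d_k$, $b_k c_k$ all pair up with opposite signs). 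Hence $M_k M_k^{*} = (a_k^2 + b_k^2 + c_k^2 + d_k^2)\, I_2 = I_2$, so $M_k$ is unitary, and combined with $\det(M_k) = 1$ (re-used from the forward computation) we conclude $M_k \in \su$.

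There is no real obstacle here; the statement is flagged as folklore precisely because it is a one-line verification once the parameterization is written down. The only mild care needed is to notice that unitarity and unit-determinant are nominally two conditions, but in this parameterization they are automatically equivalent to the same scalar equation — so no additional constraint on $(a_k, b_k, c_k, d_k)$ is lost by recording only $a_k^2 + b_k^2 + c_k^2 + d_k^2 = 1$ in the polynomial system $\mathcal{P}$ of Algorithm~\ref{algo:matrig}.
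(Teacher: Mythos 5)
Your verification is correct and is exactly the standard computation one would supply for this folklore fact, which the paper itself states without proof: the determinant expansion gives the forward direction, and the $M_k M_k^{*} = (a_k^2+b_k^2+c_k^2+d_k^2)\,I$ computation gives the reverse. Your closing remark — that unitarity and unit determinant collapse to the same scalar equation in this parameterization, so recording only $a_k^2+b_k^2+c_k^2+d_k^2=1$ in $\mathcal{P}$ loses nothing — is precisely the point the algorithm relies on.
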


 In addition, the mapped elements $M_k$'s  have to satisfy the  knot group relations obtained from the Wirtinger presentation i.e.
 \begin{equation}
 \label{eq:1}
 (+(R_k) \rightarrow M_j  M_k M_j^{-1}  M_{k+1}^{-1}  = I) \wedge (-(R_k) \rightarrow M_j^{-1}  M_k  M_j M_{k+1}^{-1} = I)
 \end{equation} 
 where $I$ is the $2\times 2$ identity matrix.

For $k \in [n]$, we define $E_k$ as follows:

\[E_k =
\begin{cases} 
     M_{k+1} M_j - M_j M_k  & +(R_k)\\
   M_k M_j - M_j M_{k+1} & -(R_k)
   \end{cases}
\]

The condition on matrices in Equation \ref{eq:1} can be restated in terms ot $E_k$ as follows:
\begin{observation}
\label{obs:2}
For $M_k \in \su$, for $i \in [n]$, a knot group embedding must satisfy the following,
\[
E_k = \begin{bmatrix}
0 & 0 \\
0 & 0 \end{bmatrix}
\]
\end{observation}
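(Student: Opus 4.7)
The plan is to derive the matrix identity $E_k = 0$ directly from the Wirtinger relation by clearing the inverses, using the fact that every $M_k$ lies in $\su$ and is therefore invertible. Since the relation of Equation \eqref{eq:1} is a product of matrices set equal to the identity, rearranging it into a difference of two products and invoking invertibility of the relevant $M_j$ (and $M_{k+1}$) will deliver the stated form.

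Concretely, I would split into the two cases of the Wirtinger relation. In the $+(R_k)$ case, the relation reads $M_j M_k M_j^{-1} M_{k+1}^{-1} = I$; multiplying on the right by $M_{k+1}$ (which exists because $M_{k+1}\in\su$) and then by $M_j$ yields $M_j M_k = M_{k+1} M_j$, which upon transposition of sides is precisely $M_{k+1} M_j - M_j M_k = 0$, i.e.\ $E_k = 0$. For the $-(R_k)$ case, the relation $M_j^{-1} M_k M_j M_{k+1}^{-1} = I$ is handled symmetrically: multiplying on the right by $M_{k+1}$ and on the left by $M_j$ gives $M_k M_j = M_j M_{k+1}$, which rearranges to $M_k M_j - M_j M_{k+1} = 0$, i.e.\ $E_k = 0$.

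I would also record explicitly that the derivation is reversible: since $M_j$ and $M_{k+1}$ are invertible elements of $\su$, the passage from $E_k = 0$ back to the group relation just reinserts the inverses, so in fact $E_k = 0$ is equivalent to the Wirtinger relation being respected by $\Phi$. This equivalence, together with Observation~\ref{obs:1} for the unit-determinant/unitarity condition, is what is actually needed in the algorithm, since the set $\mathcal{P}$ collects precisely these polynomial conditions and their simultaneous vanishing characterises the $\su$ representations of $\pi_1(S^3\setminus K)$.

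The only ``obstacle'' here is essentially bookkeeping, and even that is mild: the statement of the observation claims that $E_k$ is the zero $2\times 2$ matrix, but the algorithm only extracts the real and imaginary parts of the first row. This is justified because $M_k$ has the special $\su$ shape of \eqref{eqn:mat}, under which the second row of any polynomial expression in the $M_k$'s is determined entry-wise (by conjugation and sign change) by the first row, so vanishing of the first row forces vanishing of the whole matrix. I would note this reduction in passing, as it is the one non-trivial remark needed to connect Observation~\ref{obs:2} to the encoding used inside Algorithm~\ref{algo:matrig}.
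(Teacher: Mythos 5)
Your proposal is correct and is exactly the argument the paper intends (the paper states Observation~\ref{obs:2} without proof, since it amounts to right- and left-multiplying the relation in Equation~\ref{eq:1} by the invertible matrices $M_{k+1}$ and $M_j$ to rewrite it as $M_{k+1}M_j = M_jM_k$, resp.\ $M_jM_{k+1} = M_kM_j$). Your closing remark about the first row determining the whole matrix is also the same justification the paper gives for only collecting $Re^U(E_k)$ and $Im^U(E_k)$ into $\mathcal{P}$.
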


The above observation meets the goal of step (2). The above matrix equality can be rewritten as a system of four quadratic equations in real variables in the following fashion:
\begin{itemize}
\item Decompose the matrix $E_k$ into real and imaginary parts -- $Re(E_k)$ and $Im(E_k)$: then $E_k = 0$ if and only if $Re(E_k) = 0$ and $Im(E_k) = 0$. 
\item Define $Re^U(E_k)$ and $Im^U(E_k)$ to be the sets of polynomial equalities:
\[p(x) = 0\]
Where $p(x)$ is an entry in the top row of the $Re(E_k)$ and $Im(E_k)$ respectively. We similarly define  $Re^D(E_k)$ and $Im^D(E_k)$ for the bottom row. Either by simplifying $E_k$ or by noticing the fact that the matrices $M_k$ form a group and their product matrix must also be of the same form as Equation~\ref{eqn:mat}, one can observe that:
\[Re^U(E_k) \cup Im^U(E_k) = Re^D(E_k) \cup Im^U(E_k)\]
\end{itemize}

Consider the set $\mathcal{P}$, consisting of all the polynomials $Re^U(E_k)$, $Im^U(E_k)$ and $a_k^2 + b_k^2 + c_k^2 +d_k^2 - 1 = 0$, where $k \in [n]$. The following lemma allows us to decrease the number of equalities we have in the system of equations.

\begin{lemma}[Reverse Rabinoswitch Encoding \cite{passmore2009combined}]
\label{lem:eq-and}
Let $\mathcal{P}=\{p_1= 0, \dots,p_m =  0\}$ be the system of $m$ equality constraints, as defined above. Then $p_1 = 0 \wedge p_2 = 0 \dots \wedge p_m =  0$ is satisfiable if and only if $\sum_{i\in [m]} p_i^2 = 0$ is satisfiable.
\end{lemma}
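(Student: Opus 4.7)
The plan is to argue both directions of the biconditional by exploiting the fundamental property that squares of real numbers are non-negative. Since the variables appearing in $p_1,\dots,p_m$ are real (they are the coordinates $a_k,b_k,c_k,d_k$ introduced in Equation~\ref{eqn:mat}), each $p_i(\bar X)^2$ evaluates to a non-negative real at any real assignment, which is the only fact beyond elementary arithmetic that I will need.

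For the forward implication, I would simply observe that if $\bar X$ is a real assignment witnessing $p_1(\bar X)=\cdots=p_m(\bar X)=0$, then $p_i(\bar X)^2 = 0$ for every $i \in [m]$, so $\sum_{i \in [m]} p_i(\bar X)^2 = 0$ and the same $\bar X$ witnesses satisfiability of the squared-sum equation.

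For the reverse implication, suppose a real assignment $\bar X$ satisfies $\sum_{i\in [m]} p_i(\bar X)^2 = 0$. Since each summand $p_i(\bar X)^2$ is a non-negative real number, a sum of non-negatives can vanish only if each summand vanishes; hence $p_i(\bar X)^2 = 0$ for every $i \in [m]$. As the square of a real number is zero only when the number itself is zero, we get $p_i(\bar X) = 0$ for every $i$, so $\bar X$ witnesses satisfiability of the original conjunction.

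I do not anticipate any genuine obstacle in this argument; the only point worth flagging explicitly is that both directions rely essentially on working over $\mathbb{R}$ (or more generally, over a real closed field). The encoding would fail over $\mathbb{C}$, where sums of squares can vanish without the summands vanishing, which is precisely why this reduction is so useful for the existential theory of the reals and aligns with the setting of Proposition~\ref{theorem:real-feasibility} invoked in the preceding section.
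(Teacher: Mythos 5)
Your proof is correct, and it is the standard argument: the paper itself gives no proof of this lemma, deferring to the cited reference, and the elementary sum-of-squares-over-$\mathbb{R}$ reasoning you supply (both directions witnessed by the same assignment, with the reverse direction using non-negativity of real squares) is exactly what that reference relies on. Your closing remark that the equivalence fails over $\mathbb{C}$ correctly identifies why the encoding is specific to the real setting of Proposition~\ref{theorem:real-feasibility}.
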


The above equation gives an equivalent condition for checking the existence of a $\su$ representation of a knot group. We need to further ensure that the representation is non-commutative. In general, to check that the generators are non-commutative, we would have to check that at least one of the pairs of generators does not commute. However, the special structure of knot group relations allows for a much simpler encoding into polynomial inequalities. In the following lemma we show that finding a non-commutative $\su$ representation is equivalent to finding a representation which maps at least two distinct generators of the Wirtinger presentation to distinct elements of $\su$.  

\begin{lemma} 
\label{lem:non-commutative}
 A knot group $\pi_1(S^3\setminus K)$, with generators $g_i$, has a non-commutative homomorphism $\rho$
 to a group if and only if $\rho(g_i) \neq \rho(g_j)$, for some
 $i\neq j$.
\end{lemma}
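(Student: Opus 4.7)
The plan is to prove both directions by contrapositive, with the backward direction being the substantive one.

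For the forward direction, I would argue the contrapositive: suppose $\rho(g_i) = \rho(g_j)$ for all $i, j$. Then the image of $\rho$ is generated by a single element (the common value of $\rho(g_k)$), hence is cyclic and therefore abelian, so $\rho$ is commutative. This is immediate.

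For the backward direction, again by contrapositive, I would suppose $\rho$ is commutative (i.e., its image is abelian) and deduce that $\rho(g_i) = \rho(g_j)$ for all $i, j$. The key ingredient is the structure of the Wirtinger relations: each relation has the form $g_{k+1} = g_j g_k g_j^{-1}$ or $g_{k+1} = g_j^{-1} g_k g_j$, so in the presentation every generator is expressed as a conjugate of some other generator. Applying $\rho$, since its image is abelian, the conjugation collapses and yields $\rho(g_{k+1}) = \rho(g_k)$ for every crossing $k$.

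The remaining step is to observe that this pairwise equality of $\rho$-images across the crossings propagates to all arcs. Since the knot diagram arises from a single closed curve, the arcs (indexed by $k \in [n]$) form a cyclic sequence in which consecutive arcs $g_k$, $g_{k+1}$ are linked by a Wirtinger relation at some crossing; this is precisely what makes the Wirtinger presentation work as a presentation of the knot group. Chaining the equalities $\rho(g_1) = \rho(g_2) = \cdots = \rho(g_n)$ around this cycle gives that all generators have the same image under $\rho$, completing the contrapositive.

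The main (and essentially only) obstacle is the chaining argument — making precise that the Wirtinger relations, read as identities among $\rho$-images in an abelian setting, form a connected chain relating every pair of generators. This follows from the connectedness of the underlying knot as a single embedded $S^1$, which ensures that the sequence of arcs $g_1, g_2, \ldots, g_n$ can be traversed via successive crossings. No further topological input is needed beyond this combinatorial fact about Wirtinger presentations.
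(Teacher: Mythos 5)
Your proof is correct and uses essentially the same idea as the paper: both arguments rest on the fact that consecutive arcs $g_k, g_{k+1}$ are tied together by a Wirtinger conjugation relation, and that these relations chain cyclically around the knot. The paper merely runs the backward direction directly (picking a consecutive pair with $\rho(g_k)\neq\rho(g_{k+1})$ and reading off a non-commuting pair from the relation) rather than in the contrapositive form you use, but the content is identical.
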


\begin{proof}
In the forward direction, observe that if the generator's images are all equal then $\rho$ is commutative.
In the backward direction, assume that the image set of $\{g_i\}_{1 \leq i \leq n}$ has at least two distinct elements. Therefore, there must exist an index $k\in [n]$ such that $\rho(g_k) \neq \rho(g_{k+1})$.
Without loss of generality assume that the relation $R_k=+(R_k)$, similar steps would be true for the $-(R_k)$ form of the relations. Since $\rho(R_k) = I$, we have
\[\rho(g_{j}) \rho(g_k) \rho(g_{j})^{-1} \rho(g_{k+1})^{-1} = I .\]
As $ \rho(g_k) \neq \rho(g_{k+1} )$, it must be the case that  
\begin{align*}
&\rho(g_{j}) \rho(g_k) \rho(g_{j})^{-1} \neq \rho(g_k) \\
\implies &\rho(g_{j}) \rho(g_k) \neq \rho(g_k) \rho(g_{j}).
\end{align*}
Therefore $\rho$ is non-commutative. 
\end{proof}

If $\rho$ is the $\su$ representation, then it suffices to check that there exist at least two distinct matrices in the 
image to obtain the existence of a non-commutative representation, in addition to the earlier mentioned constraints. The following series of observations further simplify the criterion:

\begin{observation}
\label{obs:11}
Consider the matrices $M_j$ and $M_k$, as defined above where $j,k \in [n]$.  
\[(M_j \neq M_k) \leftrightarrow (a_j \neq a_k\ \vee\ b_j \neq b_k\ \vee\ c_j \neq c_k\ \vee\ d_j \neq d_k)\]
\end{observation}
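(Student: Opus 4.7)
The plan is to prove the biconditional by direct entry-wise comparison of the two matrices, since the statement is essentially the observation that two complex matrices are distinct if and only if some entry is distinct, combined with the fact that a complex number $x + iy$ vanishes iff both real and imaginary parts vanish.

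First, I would write out $M_j - M_k$ entry-wise using the defining form from Equation~\ref{eqn:mat}. The four entries are
\[
\begin{bmatrix}
(a_j - a_k) + i(b_j - b_k) & (c_j - c_k) + i(d_j - d_k) \\
-(c_j - c_k) + i(d_j - d_k) & (a_j - a_k) - i(b_j - b_k)
\end{bmatrix}.
\]
Thus $M_j = M_k$ is equivalent to the conjunction that every one of these four complex numbers is zero. Since a complex number $\alpha + i \beta$ with $\alpha, \beta \in \mathbb{R}$ equals zero iff $\alpha = 0$ and $\beta = 0$, each of the four entry equations decomposes into two real equations. The top-left entry yields $a_j = a_k \wedge b_j = b_k$, the top-right yields $c_j = c_k \wedge d_j = d_k$, and the bottom row entries repeat these same four scalar conditions (with a sign flip that is irrelevant for equality to zero).

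Consequently $M_j = M_k$ if and only if the conjunction $a_j = a_k \wedge b_j = b_k \wedge c_j = c_k \wedge d_j = d_k$ holds. Negating both sides and applying De Morgan's law gives exactly the right-hand side of the claimed biconditional, namely $a_j \neq a_k \vee b_j \neq b_k \vee c_j \neq c_k \vee d_j \neq d_k$.

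There is no substantive obstacle: the entire argument is an unpacking of the parametrisation of $M_k$. The only subtle point worth flagging is that the bottom row gives no new information, so in the subsequent algorithm one may safely encode the non-equality using only the four scalar variables associated with each generator rather than all eight matrix entries.
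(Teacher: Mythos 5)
Your proof is correct and matches the paper's intent: the paper states this as an observation without any written proof, and your entry-wise unpacking of the parametrisation of $M_k$ together with the fact that a complex number vanishes iff its real and imaginary parts do is exactly the routine verification being taken for granted. Your remark that the bottom row is redundant also accords with the paper's later use of only $\operatorname{Re^{U}}$ and $\operatorname{Im^{U}}$.
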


\begin{observation}
\label{obs:12}
Let $r_1, \dots, r_m$ be real numbers. There exist indices $j,k \in [n]$ such that $r_j \neq r_k$ if and only if $\bigvee_{\ell=2}^m (r_1 \neq r_{\ell})$ is true.
\end{observation}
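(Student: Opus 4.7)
The plan is to prove the biconditional by handling each direction separately, in both cases exploiting the role of $r_1$ as a distinguished pivot.

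For the backward direction the argument is immediate: if $r_1 \neq r_\ell$ for some $\ell \in \{2, \dots, m\}$, then setting $j=1$ and $k=\ell$ exhibits indices with $r_j \neq r_k$. For the forward direction I would argue by contrapositive: suppose that $r_1 = r_\ell$ for every $\ell \in \{2, \dots, m\}$. Then by transitivity of equality $r_j = r_1 = r_k$ for all $j, k$, so no pair of indices can witness $r_j \neq r_k$. This contradicts the hypothesis of the forward direction, completing the proof.

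There is no real obstacle here; the content of the observation is simply that $r_1$ can serve as a representative against which all other entries are compared, so a quadratic number of pairwise disequalities collapses to a linear number of disequalities against a fixed pivot. The algorithmic payoff, invoked in the previous paragraph of the paper, is that when this is combined with Observation~\ref{obs:11} one can encode the non-commutativity condition using only the $4(n-1)$ differences $a_k - a_1, b_k - b_1, c_k - c_1, d_k - d_1$ for $k \in \{2, \dots, n\}$, which is exactly the set $\mathcal{N}$ built in Algorithm~\ref{algo:matrig}. Composed with the Reverse Rabinowitsch trick of Lemma~\ref{lem:eq-and} applied in the dual ``sum of squares is nonzero'' form, this yields a single polynomial inequality that can be fed to the existential theory of the reals.
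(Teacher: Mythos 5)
Your proof is correct and is exactly the routine argument the paper has in mind; the paper states this as an observation without proof, and your pivot-plus-transitivity argument (backward direction by taking $j=1$, $k=\ell$; forward direction by contrapositive) fills in precisely what was omitted. The only quibble is inherited from the paper's own statement: the indices should range over $[m]$ rather than $[n]$, which your proof implicitly and correctly assumes.
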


The following lemma allows us to convert the system of inequalities encoding the constraint of non-commutativity into just one equivalent inequality.

\begin{lemma}
\label{lem:neq-or}
Let $\mathcal{N}=\{p_1\neq 0, \dots,p_m\neq 0\}$ be a system of $m$ inequality constraints. Then $p_1\neq 0 \vee p_2 \neq 0 \dots \vee p_m\neq 0$ is satisfiable if and only if $\sum_{i\in [m]} p_i^2 \neq 0$ is satisfiable.
\end{lemma}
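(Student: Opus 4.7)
The plan is to prove both directions directly from the single observation that a sum of real squares vanishes if and only if every summand vanishes. Since the variables range over the reals and each $p_i$ evaluates to a real number, we always have $p_i^2 \geq 0$ pointwise, and the sum $\sum_{i \in [m]} p_i^2$ is itself a non-negative real number at any assignment.

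For the forward direction, I would start with an assignment $\bar{X}^*$ satisfying $p_1 \neq 0 \vee \cdots \vee p_m \neq 0$, pick an index $i_0$ with $p_{i_0}(\bar{X}^*) \neq 0$, and argue that $p_{i_0}(\bar{X}^*)^2 > 0$ while $p_j(\bar{X}^*)^2 \geq 0$ for all other $j$. Summing, we get $\sum_i p_i(\bar{X}^*)^2 > 0$, so in particular the same assignment witnesses satisfiability of $\sum_i p_i^2 \neq 0$.

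For the backward direction, I would take an assignment $\bar{X}^*$ with $\sum_i p_i(\bar{X}^*)^2 \neq 0$. Because the sum is a sum of non-negative reals, it is in fact strictly positive, hence at least one summand $p_{i_0}(\bar{X}^*)^2$ is strictly positive, which forces $p_{i_0}(\bar{X}^*) \neq 0$. Thus the same assignment satisfies the disjunction $\bigvee_i (p_i \neq 0)$.

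There is no real obstacle here: the lemma is essentially the dual of Lemma \ref{lem:eq-and} (Reverse Rabinowitsch), and relies only on the ordering of $\mathbb{R}$ and the fact that $r^2 \geq 0$ with equality iff $r = 0$. The only thing one must be careful about is that the equivalence is pointwise at the level of assignments, so that it transfers cleanly to satisfiability of existentially quantified sentences — which is exactly how it will be used to collapse the inequality constraints of Observations \ref{obs:11} and \ref{obs:12} into the single inequality placed in $\mathcal{E}$ by the algorithm.
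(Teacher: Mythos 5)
Your proof is correct. The paper itself disposes of this lemma in one line, saying it ``follows from the negation of the statement of Lemma~\ref{lem:eq-and}''; your argument reaches the same conclusion but is more careful, and the extra care is actually warranted. Negating a biconditional between two \emph{satisfiability} claims does not formally yield the dual satisfiability claim (the negation of ``$A$ is satisfiable iff $B$ is satisfiable'' is not ``$\neg A$ is satisfiable iff $\neg B$ is satisfiable''). What makes both Lemma~\ref{lem:eq-and} and Lemma~\ref{lem:neq-or} true is the \emph{pointwise} equivalence you isolate: at every real assignment, $\sum_i p_i^2 = 0$ holds iff every $p_i = 0$ holds, because squares of reals are non-negative and a sum of non-negative reals vanishes only when each term does. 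Since the two formulas are equivalent at every assignment, so are their negations, and equivalence at every assignment transfers to equivalence of satisfiability for both the conjunction-of-equalities version and the disjunction-of-inequalities version. Your closing remark that the equivalence must be pointwise in order to pass through the existential quantifier is exactly the point the paper's one-line proof leaves implicit, so your version is, if anything, the more complete justification.
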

\begin{proof}
The lemma follows from the negation of the statement of Lemma~\ref{lem:eq-and}.
\end{proof}

Combining Lemmas~\ref{lem:non-commutative},~\ref{lem:neq-or} and Observations~\ref{obs:11},~\ref{obs:12}, we get that it suffices to add the the following inequality, to check non-commutativity: 
\[ {\sum}_{i=1}^n  (a_i - a_1)^2 + (b_i - b_1)^2+ (c_i - c_1)^2 + (d_i - d_1)^2 \neq 0\]

Let $\mathcal{E}$ be the set consisting of above inequality and the equation in Lemma \ref{lem:eq-and}. It is easy to see from the Lemma \ref{lem:eq-and}, Observations \ref{obs:11} and \ref{obs:12}  and the Lemma \ref{lem:neq-or}, that there exists a non-commutative representation from the knot group to $\su$, if and only if $\mathcal{E}$ has a solution. This completes the proof of Theorem~\ref{thm:qe_formula}.

\section{Complexity Analysis}

The algorithm consists of first computing Wirtinger presentation from the input knot diagram, which can be done in linear time. The formula $\mathcal{E}$ can be constructed in polynomial time.
Next, we analyse the complexity of deciding the feasibility of the constructed existential formula.
If the number of crossings in the provided knot diagram is $n$ then the number of real variables in the system of equations is $4n$. The system of equations $\mathcal{E}$ consists of exactly two statements; one equality and one inequality, maximum total degree of any monomial in it is $4$. Finally, note that the coefficients of polynomials occurring in our system of equations is from the set $\{-2-1,1,2\}$, as the coefficients of the polynomials  before squaring are from the set $\{-1,1\}$.  Using Proposition~\ref{theorem:real-feasibility}, we get the following result.

\begin{theorem}
\label{theorem:algo-runtime}
The procedure {\sc Unknot-QE} solves the problem \unknot in time $2^{\OO(n)}$, where $n$ is the number of crossings in the given knot diagram.
\end{theorem}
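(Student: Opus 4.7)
The plan is to reduce the complexity analysis to a direct application of Proposition~\ref{theorem:real-feasibility}, after accounting for the construction of the formula $\mathcal{E}$. First I would argue that the Wirtinger presentation can be read off from the knot diagram in time linear in $n$, and that building the matrices $M_k$, the error matrices $E_k$, the polynomial set $\mathcal{P}$, and the set $\mathcal{N}$ can all be done in time polynomial in $n$. Thus the construction of $\mathcal{E}$ contributes only a polynomial additive overhead, which is absorbed by the quantifier elimination step.

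Next I would carefully identify the parameters that feed into Proposition~\ref{theorem:real-feasibility}. The variables are the $4n$ reals $\{a_k, b_k, c_k, d_k\}_{k \in [n]}$, so $k = 4n$. The set $\mathcal{E}$ contains exactly $\ell = 2$ formulas: one equality $\sum_{p \in \mathcal{P}} p^2 = 0$ and one inequality $\sum_{p \in \mathcal{N}} p^2 \neq 0$. For the degree, each polynomial in $\mathcal{P}$ is either the unit-determinant constraint $a_k^2 + b_k^2 + c_k^2 + d_k^2 - 1$ (degree $2$) or an entry of $E_k = M_{k+1}M_j - M_j M_k$ (or its $-(R_k)$ analogue), which is also degree $2$, so after squaring and summing we obtain degree $d = 4$. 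Each polynomial in $\mathcal{N}$ is linear, so its square is degree $2 \leq 4$. Finally, the coefficients that appear in the expanded polynomials lie in $\{-2,-1,1,2\}$, hence the bit-length $L$ is a constant independent of $n$.

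Plugging these values into the bound $L \log L \log \log L \cdot (\ell \cdot d)^{\OO(k)}$ from Proposition~\ref{theorem:real-feasibility}, we obtain
\[
\OO(1) \cdot (2 \cdot 4)^{\OO(4n)} \;=\; 8^{\OO(n)} \;=\; 2^{\OO(n)}.
\]
Combined with the correctness of the encoding (Theorem~\ref{thm:qe_formula}), which guarantees that $K$ is knotted if and only if $\mathcal{E}$ is satisfiable, this yields the claimed running time.

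I do not expect any substantial obstacle in this proof: the only point requiring care is to verify that no step in the construction of $\mathcal{E}$ causes a blow-up in the number of polynomials, the degrees, or the coefficient sizes that would push the base $\ell \cdot d$ out of $\OO(1)$. In particular, the Reverse Rabinowitsch trick (Lemma~\ref{lem:eq-and}) and its dual (Lemma~\ref{lem:neq-or}) are essential here, because they collapse what would otherwise be a linear-in-$n$ number of polynomial constraints into a single equation and a single inequation, keeping $\ell = 2$ and keeping the base of the exponent constant. Without this compression the bound from Proposition~\ref{theorem:real-feasibility} would degrade to $n^{\OO(n)}$ rather than $2^{\OO(n)}$, so the main thing to double-check is that the degree $d$ remains bounded after these reductions.
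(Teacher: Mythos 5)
Your proposal is correct and follows essentially the same route as the paper's own complexity analysis: linear-time Wirtinger presentation, polynomial-time construction of $\mathcal{E}$, and then Proposition~\ref{theorem:real-feasibility} with $k=4n$ variables, $\ell=2$ constraints, degree $d=4$, and bounded coefficients, giving $2^{\OO(n)}$. Your closing observation that Lemmas~\ref{lem:eq-and} and~\ref{lem:neq-or} are what keep $\ell$ constant (and hence the base of the exponential constant) is a worthwhile clarification that the paper leaves implicit.
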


\section{Conclusion}
In this article, we presented an algorithm for \unknot, a proof of correctness, and an analysis of its complexity. The key advantage of this algorithm over the existent algorithms is the simplicity of description while having the same runtime complexity as the other currently best algorithms. As an open problem, is it possible to reduce the runtime complexity further ? It may be possible to do so by decreasing the number of variables in the equation via some substitution methods.

\bibliographystyle{plainurl}
\bibliography{ref.bib} 

\end{document}